\documentclass{article}

\usepackage{amsmath}
\usepackage{amssymb}
\usepackage{amsthm}
\usepackage{ifthen}

\newcommand{\N}{{\mathbb N}}
\newcommand{\C}{{\mathbb C}}
\newcommand{\R}{{\mathbb R}}
\newcommand{\p}{\partial}

\newcommand{\Lam}{\Lambda}

\newcommand{\parder}[3][Default]{
	\frac{\partial \ifthenelse{\equal{#1}{Default}}{}{^{#1}}#2}{
              \partial #3 \ifthenelse{\equal{#1}{Default}}{}{^{#1}}}}

\newtheorem{theorem}{Theorem}

\newtheorem{corollary}[theorem]{Corollary}
\newtheorem*{gvc}{Generalized Vanishing Conjecture (GVC)}

\begin{document}

\title{A few remarks on the Generalized Vanishing Conjecture}

\author{Michiel de Bondt\footnote{The author was supported by the Netherlands 
                          Organisation for Scientific Research (NWO)} \\
Department of Mathematics, Radboud University \\
Postbus 9010, 6500 GL Nijmegen, The Netherlands \\
\emph{E-mail:} M.deBondt@math.ru.nl}

\maketitle

\begin{abstract}
\noindent
We show that the Generalized Vanishing Conjecture $$\forall_{m \ge 1} [\Lam^m f^m = 0] \Longrightarrow
\forall_{m \gg 0} [\Lam^m (g f^m) = 0]$$ for a fixed differential operator $\Lam \in 
k[\partial]$ follows from a special case of it, namely that the additional factor $g$ is a power of the 
radical polynomial $f$. Next we show that in order to prove the Generalized Vanishing Conjecture
(up to some bound on the degree of $\Lam$), we may assume that $\Lam$ is a linear combination
of powers of distinct partial derivatives. At last, we show that the 
Generalized Vanishing Conjecture holds for products of linear forms in $\partial$, in particular 
homogeneous differential operators 
$\Lambda \in k[\partial_1,\partial_2]$.
\end{abstract}

\bigskip\noindent
\emph{Key words:} 
Generalized Vanishing Conjecture, Jacobian Conjecture, 
differential operator, Weyl algebra.

\bigskip\noindent
\emph{MSC 2010:} 
13N10, 14R15, 12H05, 32W99.

\section*{Introduction}

The Jacobian Conjecture has been the topic of many papers (see \cite{MR0663785} 
and \cite{MR1790619} and its references). Until recently, there was no framework 
available in which this notorious conjecture could be studied.
Based on work in \cite{MR2138860}, Wenhua Zhao published several papers 
(\cite{MR2134306}, \cite{MR2247890}, \cite{MR2368014}, \cite{MR2859886})
which have changed this situation dramatically.

In these papers, he introduced various conjectures which imply the Jacobian
Conjecture. One of these conjectures is the so-called Generalized Vanishing 
Conjecture. To describe it, we fix the following notations. Let 
$k[x]=k[x_1,\ldots,x_n]$ be the polynomial ring in $n$ variables over a field 
$k$. By $D=k[\p_1,\ldots,\p_n]$ we denote the ring of differential operators 
with constant coefficients. By $k$-linearity of taking partial derivative,
the following defines $\Lam f \in k[x]$ with $\Lam \in D$ and $f \in k[x]$ 
uniquely:
\begin{align*}
(\Lam_1+\Lam_2) f &= \Lam_1 f + \Lam_2 f & 
(\Lam_1\Lam_2) f &= \Lam_1(\Lam_2 f) & \p_i f = \parder{}{x_i} f\mbox{,}
\end{align*}
where $\Lam_1, \Lam_2 \in D$ and $f \in k[x]$.

\begin{gvc} 
Let $\Lam\in D$ and $f\in k[x]$ such that 
\begin{align*}
\Lam^mf^m&=0 \mbox{ for all }m\geq 1\mbox{.}
\intertext{Then for all $g\in k[x]$, we have}
\Lam^m(gf^m)&=0\mbox{ for all }m \gg 0\mbox{.}
\end{align*}
\end{gvc}

\noindent
It was shown in \cite[Th.\@ 7.2]{MR2247890} that for a field $k$ of characteristic zero,
a positive answer to this conjecture (in all 
dimensions), with $\Lam$ being the Laplace operator $\Delta$ (and $g = f$), implies the Jacobian 
Conjecture. For a field of positive characteristic $p$, the GVC can easily be proved, because
$\Lam^p g = 0$ for all $\Lam \in k[\p]$ with trivial constant part and all $g \in k[x]$.

The main results of this paper can be described as follows. First we show 
that the $g$'s in the formulation of the GVC can be replaced by powers of $f$. 
We will do that in a corollary of the following theorem.

\begin{theorem} \label{thm1}
Let $\tilde{f},g \in k[x]$ and $m \ge d$. Suppose that 
\begin{align*}
\Lam^{m-d} \tilde{f} = 0
\intertext{for some $\Lam \in D$. If $\deg g \le d$, then}
\Lam^m (g \tilde{f}) = 0
\end{align*}
as well.
\end{theorem}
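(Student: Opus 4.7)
My plan is to induct on $d\ge 0$. When $d=0$ the polynomial $g$ is a scalar and the assumption becomes $\Lam^m\tilde f=0$, so $\Lam^m(g\tilde f)=g\Lam^m\tilde f=0$. For the inductive step, I would split off the constant part of $g$,
\[
g \;=\; g(0) + \sum_{i=1}^{n} x_i\, h_i,
\]
where each $h_i$ can be chosen of degree at most $d-1$, giving
\[
\Lam^m(g\tilde f) \;=\; g(0)\,\Lam^m\tilde f + \sum_{i=1}^{n}\Lam^m(x_i h_i\tilde f).
\]
The constant contribution vanishes, since $\Lam^m\tilde f = \Lam^d(\Lam^{m-d}\tilde f) = 0$.

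The key tool for the sum is the Weyl-algebra commutator $[\Lam,x_i]=\partial_{T_i}\Lam$, where $\partial_{T_i}\Lam$ denotes the formal partial derivative of $\Lam$ regarded as a polynomial in the variables $\partial_1,\ldots,\partial_n$; this identity follows from $[\partial_j,x_i]=\delta_{ij}$ by $k$-linearity. Because $\partial_{T_i}\Lam$ lives in the commutative ring $D=k[\partial]$, it commutes with $\Lam$, and the telescoping identity $[\Lam^m,x_i]=\sum_{k=0}^{m-1}\Lam^{m-1-k}[\Lam,x_i]\Lam^{k}$ collapses to
\[
[\Lam^m,x_i] \;=\; m\,\Lam^{m-1}\,\partial_{T_i}\Lam.
\]
Applying this to $h_i\tilde f$ yields
\[
\Lam^m(x_i h_i\tilde f) \;=\; x_i\,\Lam^m(h_i\tilde f) + m\,(\partial_{T_i}\Lam)\,\Lam^{m-1}(h_i\tilde f).
\]

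To finish, I would feed $h_i$ (of degree at most $d-1$) into the inductive hypothesis twice. The identity $\Lam^{m-(d-1)}\tilde f=\Lam(\Lam^{m-d}\tilde f)=0$ combined with the hypothesis for the pair $(m,d-1)$ gives $\Lam^m(h_i\tilde f)=0$, and $\Lam^{(m-1)-(d-1)}\tilde f=\Lam^{m-d}\tilde f=0$ combined with the hypothesis for $(m-1,d-1)$ gives $\Lam^{m-1}(h_i\tilde f)=0$ (the requirement $m\ge d$ guarantees $m-1\ge d-1$). Both terms in the commutator expansion therefore vanish, which closes the induction. The one point requiring care is the compact form $[\Lam^m,x_i]=m\Lam^{m-1}\partial_{T_i}\Lam$ of the iterated commutator, which hinges on $\Lam$ commuting with its own formal derivative $\partial_{T_i}\Lam$ inside $D$; everything else is straightforward bookkeeping on the exponents $m$ and $d$.
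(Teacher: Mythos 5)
Your argument is correct, and it takes a genuinely different route from the paper's. The paper first proves a preparatory fact, its \eqref{eqn1}: for $g$ of degree $d$, the polynomial $[\Lam,g]\tilde f=\Lam(g\tilde f)-g(\Lam\tilde f)$ is a $k$-linear combination of terms $\Lam^{*}(g^{*}\tilde f)$ with $\deg g^{*}\le d-1$ (proved by working through monomials $\p^{\alpha}$), and then runs a double induction on $m$ and $d$ via $\Lam^m(g\tilde f)=\Lam^{m-1}\big(g(\Lam\tilde f)\big)+\Lam^{m-1}\big([\Lam,g]\tilde f\big)$, i.e.\ it commutes one copy of $\Lam$ at a time past the whole polynomial $g$. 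You instead peel one degree off $g$ by writing $g=g(0)+\sum_i x_i h_i$ and commute the whole power $\Lam^m$ past a single coordinate, using the exact formula $[\Lam^m,x_i]=m\,\Lam^{m-1}\,\frac{\partial\Lam}{\partial(\p_i)}$, whose right-hand factor lies in the commutative ring $D$; this reduces everything to two invocations of the inductive hypothesis at level $d-1$ (with exponents $m$ and $m-1$, both legitimate since $m\ge d$), so a single induction on $d$ suffices and no analogue of \eqref{eqn1} is needed. Your route buys explicitness and economy: the commutator with one variable has a closed form, the exponent bookkeeping is visible, and the stray factor $m$ is harmless in any characteristic (in characteristic $p$ it may vanish in $k$, which only helps), so the argument is characteristic-free just as the paper's is. The paper's route buys the formulation through the spaces $D^{(r)}_{[x]}\tilde f$, which it then relates to the order filtration of the Weyl algebra in the remarks after the proof. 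One small point of rigor: $[\Lam,x_i]=\frac{\partial\Lam}{\partial(\p_i)}$ does not follow from $[\p_j,x_i]=\delta_{ij}$ by $k$-linearity alone; you also need that $\La\mapsto[\La,x_i]$ is a derivation of $D$ (equivalently, check the identity on monomials $\p^{\alpha}$ via the Leibniz rule), a routine fix that does not affect the proof.
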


\begin{corollary} \label{col1}
The GVC (for some $\Lam \in D$) is equivalent to the following statement: 
if $f\in k[x]$ is such that 
\begin{align*}
\Lam^m f^m&=0 \mbox{ for all }m\geq 1\mbox{,}
\intertext{then for each $d\geq 1$, we have}
\Lam^m f^{m+d}&=0\mbox{ for all }m \gg 0\mbox{.}
\end{align*}
\end{corollary}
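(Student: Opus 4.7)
One direction is immediate: the conclusion of the restricted statement is just the conclusion of the GVC specialized to $g := f^d \in k[x]$, so the full GVC implies the restricted version.

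For the converse, the plan is to bootstrap the restricted hypothesis up to the full GVC using Theorem \ref{thm1} as a black box. Assume the restricted version holds for our fixed $\Lam$, and suppose $f \in k[x]$ satisfies $\Lam^m f^m = 0$ for all $m \ge 1$. Let $g \in k[x]$ be arbitrary. If $g$ is a constant then $\Lam^m(gf^m) = g \, \Lam^m f^m = 0$ for every $m \ge 1$ and there is nothing to prove, so assume $d := \deg g \ge 1$. I would apply Theorem \ref{thm1} with $\tilde{f} := f^m$ (for each $m$ separately), which yields $\Lam^m(g f^m) = 0$ as soon as the hypothesis $\Lam^{m-d}\tilde{f} = 0$ is available for that $m$.

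To supply this hypothesis, I rewrite $\Lam^{m-d}\tilde{f} = \Lam^{m-d} f^m = \Lam^{m-d} f^{(m-d)+d}$. The restricted assumption, applied at the fixed value $d$, provides a bound $N$ such that $\Lam^{m'} f^{m'+d} = 0$ for every $m' \ge N$. Setting $m' := m - d$, one sees that any $m \ge N + d$ gives $\Lam^{m-d} f^m = 0$, and then Theorem \ref{thm1} delivers $\Lam^m(gf^m) = 0$. This is exactly the conclusion of the GVC for this $g$.

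The only substantive point in the argument is the exponent matching $m = (m-d)+d$, which forces the choice $\tilde{f} = f^m$ (rather than, say, $f^{m-d}$) when invoking Theorem \ref{thm1}; all the genuine work has been absorbed into Theorem \ref{thm1}, so I do not anticipate any real obstacle beyond this bookkeeping.
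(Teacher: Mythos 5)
Your proposal is correct and follows essentially the same route as the paper: take $g = f^d$ for one direction, and for the converse use the restricted statement to get $\Lam^{m-d} f^m = 0$ for $m \gg 0$ and then apply Theorem \ref{thm1} with $\tilde{f} = f^m$. The only cosmetic difference is that you treat constant $g$ separately, whereas the paper simply chooses some $d \ge \max(1,\deg g)$.
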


\begin{proof}
The statement of corollary \ref{col1} follows from the GVC (for $\Lam \in D$) by taking $g = f^d$, 
so it remains to prove the converse. For that purpose, let $g \in k[x]$ and choose $d \ge \deg g$.
Combining the condition $\Lam^mf^m =0$ for all $m\geq 1$ of the GVC (for $\Lam \in D$) and the 
statement of corollary \ref{col1}, we get $\Lam^mf^{m+d} = 0$ for all $m \gg 0$, which is 
equivalent to $\Lam^{m-d}f^m=0$ for all $m \gg 0$. By taking $\tilde{f} = f^m$ in
theorem \ref{thm1}, we subsequently obtain $\Lam^m (g f^m) = 0$ for all $m \gg 0$.
\end{proof}

\noindent
In the proof of \cite[Th.\@ 1.5]{MR2418165}, corollary \ref{col1} is proved for 
$\Lam = \Delta$, the Laplace operator. The claim of \cite[Th.\@ 1.5]{MR2418165} is that
one can even take $d = 1$ in corollary \ref{col1} when $\Lam = \Delta$, which 
subsequently follows from (3) $\Rightarrow$ (2) of \cite[Th.\@ 6.2]{MR2247890}.
Hence we can take $g = f$ in the GVC when we restrict ourselves to $\Lam = \Delta$.

If $k$ has characteristic zero, then by \cite[\S 1.1]{MR1356713}, we can also formulate
theorem \ref{thm1} and corollary \ref{col1} in terms of the $n$-th Weyl algebra $A_n(k)$ over $k$.
$A_n(k)$ is the algebra of skew polynomials in $x$ and $\p$ over $k$, with the following commutator 
relations, where $\delta$ is Kronecker's delta:
\begin{align*}
x_ix_j-x_jx_i&=\p_i\p_j-\p_j\p_i=0 & \p_ix_j-x_j\p_i=\delta_{ij} & \mbox{ for all } i,j\mbox{.}
\end{align*}
We get the Weyl algebra formulation of theorem \ref{thm1} and corollary \ref{col1} as follows.
In each of the equalities of the form $E = 0$ in them, we interpret the left hand side $E$
as an element of $A_n(k)$, and replace `$E = 0$' by
`$E$ is contained in the left ideal of $A_n(k)$ generated by $\p_1, \p_2, \ldots, \p_n$'. See
\cite[\S 5.1]{MR1356713} for the justification of this reformulation, especially 
\cite[Prop.\@ 5.1.2]{MR1356713}.

If $k$ has positive characteristic, then the Weyl algebra formulations of theorem \ref{thm1} and 
corollary \ref{col1} are not just reformulations of theorem \ref{thm1} and corollary \ref{col1},
but really different claims, see \cite[\S 2.3]{MR1356713}. This is however not essential for the 
proof of corollary \ref{col1}, and neither will be essential for the proof of theorem \ref{thm1} 
in the next section, so we actually have two theorems \ref{thm1} and corollaries \ref{col1}. 

Next we show that it suffices to investigate the GVC for a special class of operators. 
More precisely we show the following.

\begin{theorem} \label{thm2}
It suffices to investigate the GVC in all 
dimensions for operators of the form 
$$
c_1\p_1^{d_1}+c_2\p_2^{d_2}+\cdots+c_n\p_n^{d_n}\mbox{,}
$$
where the $d_i$ are positive integers and $c_i \in k$. Furthermore, we may take 
$c_i = 1$ or $c_i^2 = 1$ for all $i$ when $k = \C$ or $k = \R$ respectively. 
\end{theorem}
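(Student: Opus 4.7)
The plan is to reduce the GVC for an arbitrary $\Lam \in D$ to GVC for an operator of the required special form, by introducing auxiliary variables and using polarization. Since GVC is trivial in positive characteristic (as noted in the introduction) and since a nonzero constant part of $\Lam$ forces $f = 0$, I may assume $k$ has characteristic zero and that $\Lam$ has zero constant term. Decomposing $\Lam = \sum_{d \ge 1} \Lam_d$ into homogeneous components and applying polarization, I write each $\Lam_d$ as a finite linear combination $\Lam_d = \sum_{j} c_{d,j} (v^{(d,j)} \cdot \p)^d$ of $d$-th powers of linear forms in $\p$, for suitable $v^{(d,j)} \in k^n$.

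Next I adjoin a fresh variable $t_{d,j}$ for each pair $(d,j)$ appearing above and pass to the enlarged polynomial ring $k[x,t]$. Set
$$
\tilde\Lam \;:=\; \sum_{d,j} c_{d,j}\, \p_{t_{d,j}}^{\,d},
$$
which is a linear combination of pure positive powers of distinct partial derivatives, exactly of the form required by the theorem (padding with zero terms if one wishes to match the number of terms to the number of variables). Consider the injective $k$-algebra embedding $\sigma : k[x] \hookrightarrow k[x,t]$ given by
$$
\sigma(x_i) \;:=\; x_i + \sum_{d,j} v^{(d,j)}_{i}\, t_{d,j}.
$$
By the chain rule, $\p_{t_{d,j}} \circ \sigma = \sigma \circ (v^{(d,j)} \cdot \p)$, and iterating yields the key intertwining identity $\tilde\Lam^m \circ \sigma = \sigma \circ \Lam^m$ for every $m \ge 0$.

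The reduction is then immediate. Because $\sigma$ is a ring homomorphism, $\sigma(f^m) = \sigma(f)^m$, so the hypothesis $\Lam^m f^m = 0$ for all $m$ translates to $\tilde\Lam^m \sigma(f)^m = 0$ for all $m$. Assuming the GVC for $\tilde\Lam$ (which is of the required form), we obtain $\tilde\Lam^m(G \cdot \sigma(f)^m) = 0$ for large $m$ and every $G \in k[x,t]$. Specializing $G := \sigma(g)$ and using $\sigma(gf^m) = \sigma(g)\sigma(f)^m$ gives $\sigma(\Lam^m(gf^m)) = 0$ for large $m$, whence injectivity of $\sigma$ forces $\Lam^m(gf^m) = 0$, as desired.

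For the normalization of the coefficients (after renaming the $t_{d,j}$ as $y_i$ and the $c_{d,j}$ as $c_i$), I use the diagonal automorphism $\phi(y_i) = a_i y_i$ with $a_i \in k^*$, which conjugates $\p_{y_i}$ to $a_i^{-1}\p_{y_i}$ and hence sends $c_i\p_{y_i}^{d_i}$ to $c_i a_i^{-d_i}\p_{y_i}^{d_i}$; this conjugation preserves the GVC framework, so the $c_i$ may be adjusted freely. Over $\C$, any $d_i$-th root of $c_i$ works as $a_i$, yielding $c_i = 1$. Over $\R$, the only obstruction is when $d_i$ is even and $c_i < 0$, in which case $a_i = |c_i|^{1/d_i}$ normalizes to $c_i = -1$, so in general $c_i^2 = 1$. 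The main potential obstacle is the bookkeeping of auxiliary variables in the inhomogeneous case (each homogeneous degree needs its own family of $t_{d,j}$'s so that the $\p_{t_{d,j}}^d$ occupy distinct partials), but once the intertwining $\tilde\Lam \circ \sigma = \sigma \circ \Lam$ is in place the argument essentially concludes itself.
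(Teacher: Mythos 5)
Your proposal is correct and takes essentially the same route as the paper: polarize $\Lam$ into a $k$-linear combination of powers of linear forms, adjoin one fresh variable per power, replace $\Lam$ by the corresponding combination of pure powers of the new partial derivatives, and transfer the GVC back via a linear substitution --- your embedding $\sigma$ and the intertwining $\tilde\Lam^m\circ\sigma=\sigma\circ\Lam^m$ are exactly the paper's coordinate change $x_i'=x_i-(a_{1i}y_1+\cdots+a_{Ni}y_N)$ applied to its operator $\Lam^{*}$, read in the opposite direction. The only cosmetic differences are that you normalize the coefficients over $\C$ and $\R$ by a diagonal rescaling of the new variables afterwards instead of absorbing roots of the $c_i$ into the linear forms, and that you dispose of a possible constant term of $\Lam$ (and of positive characteristic) explicitly.
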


\noindent 
Finally we prove the GVC for the following class of operators.

\begin{theorem} \label{thm3}
Let $\Lam\in D$ be a product of linear
forms (in the $\p_i$). Then the GVC holds for $\Lam$.
\end{theorem}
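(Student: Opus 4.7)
The plan is to prove the theorem by induction on the number $r$ of linear-form factors of $\Lam$. The base case $r \leq 1$ reduces, after a $GL_n(k)$-change of coordinates, to $\Lam = \p_1$; the $m = 1$ case of the hypothesis then forces $\p_1 f = 0$, so $\p_1^m(g f^m) = (\p_1^m g) f^m$ vanishes for every $m > \deg_{x_1} g$.

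For the inductive step $r \geq 2$, I would perform a coordinate change so that one of the linear factors becomes $\p_1$, writing $\Lam = \p_1 \Lam'$ with $\Lam' = L_2 \cdots L_r$, and decompose each $L_i = a_i \p_1 + M_i$ with $M_i \in k[\p_2,\ldots,\p_n]$. Let $\bar{\Lam}' := M_2 \cdots M_r$, itself a product of $r - 1$ linear forms, but in the smaller ring $k[\p_2,\ldots,\p_n]$. The key observation is that in the expansion $\Lam'^m = \prod_{i=2}^r (a_i \p_1 + M_i)^m$, only the pure-$M$ summand $\prod_i M_i^m = \bar{\Lam}'^m$ preserves the $x_1$-degree of $f^m$; every other summand contains a $\p_1$ factor and strictly lowers it. Hence, writing $e := \deg_{x_1} f$ and letting $f_e$ denote the leading $x_1$-coefficient of $f$, the coefficient of $x_1^{me}$ in $\Lam'^m f^m$ equals precisely $\bar{\Lam}'^m f_e^m$. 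The hypothesis $\Lam^m f^m = 0$ forces $\deg_{x_1}(\Lam'^m f^m) < m$, and so (assuming $e \geq 1$; if $e = 0$ then $\Lam f^m = 0$ identically and the argument collapses) we deduce $\bar{\Lam}'^m f_e^m = 0$ for every $m$. Applying the outer induction to $\bar{\Lam}'$ supplies the GVC for $\bar{\Lam}'$ acting on $f_e$.

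To conclude the GVC for $\Lam$ on $f$, I would analyze each coefficient $[x_1^k]\Lam'^m(g f^m)$ for $k \geq m$ and show that it vanishes for $m \gg 0$; this gives $\deg_{x_1}(\Lam'^m(g f^m)) < m$ and hence $\Lam^m(g f^m) = \p_1^m \Lam'^m(g f^m) = 0$. Expanding $\Lam'^m$ in powers of $\p_1$ and using the generalized Leibniz rule, each such coefficient decomposes into finitely many terms of the shape $\bar{\Lam}'^q(Y \cdot f_e^p)$, where $Y \in k[x_2,\ldots,x_n]$ is a polynomial of bounded degree built from the $x_1$-coefficients of $g$ and $f$, and where the non-negative integers $p, q$ depend on the summand. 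Each such term vanishes for $m \gg 0$ by the GVC for $\bar{\Lam}'$ on $f_e$: when $q \leq p$, by writing $\bar{\Lam}'^q(Y f_e^p) = \bar{\Lam}'^q((Y f_e^{p-q}) f_e^q)$ and treating $Y f_e^{p-q}$ as a fixed multiplier; when $q > p$, by writing $\bar{\Lam}'^q(Y f_e^p) = \bar{\Lam}'^{q-p} \bar{\Lam}'^p(Y f_e^p)$ and invoking GVC on the inner piece (or, when $p$ remains bounded and $q$ grows, a pure degree bound on $\bar{\Lam}'^q$). The main obstacle lies in this last step: one must carefully parameterize every summand in the expansion of $[x_1^k]\Lam'^m(g f^m)$ (indexed by the compositions of the $x_1$-exponents of the $m$ factors of $f^m$, the $\p_1$-power $s$ extracted from $\Lam'^m$, and the $x_1$-coefficient of $g$ involved) and verify that each falls into one of the regimes above uniformly for $m \gg 0$; this combinatorial bookkeeping, though routine in spirit, is the most delicate aspect of the argument.
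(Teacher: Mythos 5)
Your reduction of the hypothesis is fine as far as it goes (the top $x_1$-coefficient of $\Lam'^m f^m$ is indeed $\bar\Lam'^m f_e^m$, so $\bar\Lam'^m f_e^m=0$ for all $m$ when $e\ge 1$), but the concluding step, which you yourself flag as the delicate one, has a genuine obstruction rather than mere bookkeeping, at least when $e=\deg_{x_1}f\ge 2$. To kill $\Lam^m(gf^m)=\p_1^m\Lam'^m(gf^m)$ you must kill every coefficient $[x_1^k]\Lam'^m(gf^m)$ with $k\ge m$, and since $\deg_{x_1}(gf^m)=me+\deg_{x_1}g$, the relevant $k$ range over an interval of length about $m(e-1)$. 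In that range the contributing data are not of the claimed shape $\bar\Lam'^{\,q}(Y f_e^{\,p})$ with $Y$ of bounded degree: (i) the coefficients $[x_1^t]f^m$ that enter for $t$ near $m$ (far below $me$) involve products with an unbounded number of non-leading coefficients $f_j$, $j<e$, so the ``multipliers'' $Y$ have degree growing with $m$ and the number of summands grows with $m$; (ii) the operators arising from expanding $\Lam'^m=\prod_i(a_i\p_1+M_i)^m$ are mixed products $\p_1^{S}\prod_i M_i^{\,m-s_i}$ in which the $s_i$ can be of order $m$ and mutually unequal, and such a product is not $\bar\Lam'^{\,q}$ times a bounded correction. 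Moreover, the inductive GVC for $(\bar\Lam',f_e)$ only yields, for each fixed multiplier $h$, a threshold $m\ge N(h)$; to apply it to a family of multipliers whose degrees grow with $m$ you would need a uniformity that the GVC does not provide (finite-dimensionality of bounded-degree multipliers is exactly what fails here). Note also that you discard most of the hypothesis: $\Lam^m f^m=0$ constrains all coefficients of $\Lam'^m f^m$ in degrees $\ge m$, not just the top one, and it is not clear that the single consequence $\bar\Lam'^m f_e^m=0$ suffices. Finally, the case where some $M_i=0$ (a repeated factor proportional to $\p_1$, e.g.\ $\Lam=\p_1^2$) makes $\bar\Lam'=0$ and your extracted information vacuous, so that case needs a separate argument, as does $e=0$ (which is in fact trivially fine by an $x_1$-degree count, but you leave it hanging).

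For comparison, the paper avoids this combinatorics entirely: it adjoins new variables $y_1,\dots,y_N$, replaces $\Lam=l_1\cdots l_N$ by $\Lam^*=(\p_{y_1}+l_1)\cdots(\p_{y_N}+l_N)$, and observes that a linear change of coordinates turns $\Lam^*$ into $\p_{y_1'}\cdots\p_{y_N'}$, for which the GVC is already known; since $\Lam^*$ acts on polynomials in $x$ alone exactly as $\Lam$ does, both the hypothesis and the conclusion transfer between $\Lam$ and $\Lam^*$. If you want to salvage your inductive line, you would need either a uniform (degree-quantified) version of the GVC for $\bar\Lam'$ or a way to exploit the vanishing of all high $x_1$-coefficients of $\Lam'^m f^m$, not just the leading one; as written, the proposal is incomplete at its central step.
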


\noindent 
As a consequence, we can deduce the following.

\begin{corollary} In dimension two, the GVC holds for any homogeneous operator.
\end{corollary}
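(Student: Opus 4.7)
The plan is to reduce the corollary directly to theorem \ref{thm3} via the elementary fact that, over an algebraically closed field, every homogeneous polynomial in two variables splits into linear factors. Given a homogeneous $\Lam \in k[\p_1,\p_2]$, such a factorization realizes $\Lam$ as a product of linear forms in $\p_1,\p_2$, and theorem \ref{thm3} then applies verbatim.

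First I would reduce to the case $k = \bar k$. Both the hypothesis $\Lam^m f^m = 0$ for all $m \ge 1$ and the conclusion $\Lam^m(g f^m) = 0$ for $m \gg 0$ of the GVC are vanishing statements for polynomials with coefficients in $k$; since the inclusion $k[x_1,x_2] \hookrightarrow \bar k[x_1,x_2]$ detects vanishing, the GVC for $\Lam$ over $k$ is implied by the GVC for $\Lam$, viewed as an operator in $\bar k[\p_1,\p_2]$, applied to the same $f,g \in k[x_1,x_2] \subseteq \bar k[x_1,x_2]$.

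Now, writing a nonzero homogeneous $\Lam$ of degree $d \ge 1$ as $\Lam = \sum_{i=0}^d c_i \p_1^{d-i}\p_2^i$, the associated binary form $P(X,Y) = \sum_{i=0}^d c_i X^{d-i} Y^i$ in two commuting indeterminates splits over $\bar k$ as $P(X,Y) = \prod_{j=1}^d (a_j X + b_j Y)$ (dehomogenize with $Y = 1$, apply the fundamental theorem of algebra, rehomogenize). Since $\p_1$ and $\p_2$ commute, the substitution $X = \p_1$, $Y = \p_2$ yields $\Lam = \prod_{j=1}^d (a_j \p_1 + b_j \p_2)$, a product of linear forms in $\p$; the degenerate cases $\Lam = 0$ and $\deg \Lam = 0$ are trivial. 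Theorem \ref{thm3} now immediately gives the GVC for $\Lam$, hence for $\Lam$ over the original field $k$ by the reduction above. I do not foresee any genuine obstacle: the field-extension argument is routine and the splitting of a binary form is classical, so all of the real content has already been packed into theorem \ref{thm3}; one need only note that ``homogeneous operator'' is understood in the $\p_i$'s, which is the natural convention since $k[\p_1,\p_2]$ is a commutative polynomial ring.
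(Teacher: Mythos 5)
Your proposal is correct and follows the same route as the paper: assume without loss of generality that $k$ is algebraically closed, factor the homogeneous binary form in $\p_1,\p_2$ into linear forms, and invoke theorem \ref{thm3}. The only difference is that you spell out the (routine) justification of the reduction to $\bar{k}$, which the paper simply asserts.
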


\begin{proof}
Assume without loss of generality that $k$ is algebraically closed. Then
any homogeneous polynomial in two variables is a product
of linear factors. Hence the result follows from theorem \ref{thm3}.
\end{proof}

\section*{The proofs}

Fix $\tilde{f} \in k[x]$. For $\Lam \in D$ and $f,g \in k[x]$, define
$$
[\Lam,g]f := \Lam(gf)-g(\Lam f)\mbox{.}
$$
Notice that by the product rule of diffentiation,
\begin{equation} \label{Leib}
[\p_i,g]f = \p_i(gf)-g(\p_if) = g_{x_i} f 
\end{equation}
for all $f,g \in k[x]$, where $g_{x_i}$ is the polynomial $\parder{}{x_i} g$.
More generally, write $\p^\alpha = \p_1^{\alpha_1} \p_2^{\alpha_2} \cdots 
\p_n^{\alpha_n}$. If $\alpha_i = 0$ for all $i$, then trivially 
$[\p^{\alpha},g] \tilde{f} = 0$ for all $g \in k[x]$. Hence assume that $\alpha_i \ne 0$
for some $i$ and define 
$\p^{\hat{\alpha}}$ by $\p^{\hat{\alpha}} \p_i = \p^{\alpha}$. Using
\eqref{Leib} with $f = \p^{\hat{\alpha}} \tilde{f}$, we obtain
\begin{align*}
[\p^{\alpha},g] \tilde{f} 
&= \p^{\alpha}(g \tilde{f}) - g (\p^{\alpha} \tilde{f}) \\
&= \p_i\big(\p^{\hat{\alpha}} (g \tilde{f}) - g (\p^{\hat{\alpha}} \tilde{f})\big) 
   + \p_i\big(g (\p^{\hat{\alpha}} \tilde{f})\big) - g \big(\p_i(\p^{\hat{\alpha}} \tilde{f})\big) \\
&= \p_i\big([\p^{\hat{\alpha}},g]\tilde{f}\big) + [\p_i,g] (\p^{\hat{\alpha}} \tilde{f}) \\
&= \p_i\big([\p^{\hat{\alpha}},g]\tilde{f}\big) - -g_{x_i} (\p^{\hat{\alpha}} \tilde{f}) \\
&= \p_i\big([\p^{\hat{\alpha}},g]\tilde{f}\big) + \p^{\hat{\alpha}}(g_{x_i} \tilde{f}) 
   - [\p^{\hat{\alpha}},g_{x_i}] \tilde{f}
\end{align*}
for all $g \in k[x]$. By induction on $\sum_{i=1}^n \alpha_i$, it follows that $[\p^{\alpha},g] 
\tilde{f}$ can be expressed as a $k$-linear combination of polynomials of the form 
$\Lam^{*} (g^{*} \tilde{f})$, where $\Lam^{*} \in D$ and $g^{*} \in k[x]$ is a polynomial of degree 
less than $\deg g$.

Now fix $g \in k[x]$ as well and set $d := \deg g$.
If we define $D_{[x]}^{(r)} \tilde{f}$ as the $k$-space of polynomials $\Lam^{*} (g^{*} \tilde{f})$, 
with $\Lam^{*} \in D$ and $g^{*} \in k[x]$ of degree $\le r$, then
$[\p^{\alpha},g]\tilde{f} \in D_{[x]}^{(d - 1)}\tilde{f}$. 
Writing an arbitrary operator $\Lam \in D$ as a $k$-linear combination of monomials $\p^{\alpha}$,
we deduce that 
\begin{equation} \label{eqn1}
[\Lam,g]\tilde{f} \in D_{[x]}^{(d - 1)}\tilde{f} \mbox{ for all $\Lam \in D$.}
\end{equation}

\begin{proof}[Proof of theorem \ref{thm1}]
Assume $\deg g \le d$. Recall that we have $\Lam^{m-d} \tilde{f} = 0$ and
must show that the left hand side $\Lam^m (g \tilde{f})$ of (\ref{eqn2}) below is zero. 
If $m=d$, then $\tilde{f}= 0$. If $d = 0$, then $g \in k$. Hence the cases $m = d$ and $d = 0$ 
are trivial. So assume that $m > d > 0$. Notice that $\Lam (g\tilde{f}) = g (\Lam \tilde{f}) + 
[\Lam,g]\tilde{f}$, whence
\begin{equation}
\Lam^m (g \tilde{f}) = \Lam^{m-1} \big(g (\Lam \tilde{f})\big)
+ \Lam^{m-1} \big([\Lam, g] \tilde{f}\big)\mbox{.} \label{eqn2}
\end{equation}
Since $\Lam^{(m-1)-d} (\Lam \tilde{f}) = \Lam^{m-d} (\tilde{f}) = 0$, it follows 
by induction on $m$ that the first term $\Lam^{m-1} \big(g (\Lam \tilde{f})\big) = 0$ on 
the right hand side of (\ref{eqn2}) vanishes. 

Hence it suffices to show that the second term $\Lam^{m-1} \big([\Lam, g] \tilde{f}\big)$ on 
the right hand side of (\ref{eqn2}) vanishes as well. For that purpose, we use \eqref{eqn1}
to write $[\Lam, g] \tilde{f}$ as a sum of polynomials of the form $\Lam^{*} (g^{*} \tilde{f})$, 
where $\Lam^{*} \in D$ and $g^{*} \in k[x]$ such that $\deg g^{*} \le d-1$ for each such term. 
Thus the second term $\Lam^{m-1} \big([\Lam, g] \tilde{f}\big)$ on the right hand side of 
(\ref{eqn2}) is a sum of polynomials of the form
\begin{equation}
\Lam^{m-1} \Lam^{*} (g^{*} \tilde{f}) = \Lam^{*} \big(\Lam^{m-1} (g^{*} \tilde{f})\big) \label{eqn3}\mbox{,}
\end{equation}
with $\Lam^{*}$ and $g^{*}$ as above.
Since $\Lam^{(m-1)-(d-1)} (\tilde{f}) = \Lam^{m-d} (\tilde{f}) = 0$, it follows 
by induction on $d$ that the second factor on the right hand side of (\ref{eqn3})
vanishes. So both sides of (\ref{eqn2}) are zero.
\end{proof}

\noindent
Just as theorem \ref{thm1} itself, the above proof can be reformulated in terms of the $n$-th
Weyl algebra $A_n(k)$ over $k$ (resulting in the proof of a different theorem when $k$ has positive
characteristic). The Weyl algebra formulation of \eqref{Leib} is \cite[Exrc.\@ 1.4.1]{MR1356713}
and the definition of $D_{[x]}^{(r)} \tilde{f}$ corresponds to a filtration which is isomorphic
to the order filtration in \cite[\S 7.2]{MR1356713}, see also \cite[\S 3.2]{MR1356713}.
The actual isomorphism is generated by 
\begin{align*}
x_i &\mapsto \p_i & \p_i &\mapsto -x_i & \mbox{ for all $i$.}
\end{align*} 
By way of the same 
isomorphism, it follows from \cite[Prop.\@ 1.2.1]{MR1356713} that every element of $A_n(k)$ is a 
linear combination of elements of the form $\Lam^{*} g^{*}$, where $\Lam^{*} \in D$ and 
$g^{*} \in k[x]$. This fact has some connection with \eqref{eqn1}.

\begin{proof}[Proof of theorem \ref{thm2}.] Let $\Lam\in D$ be a non-zero operator and 
$f\in k[x]$ such that $\Lam^m f^m=0$ for all $m\geq 1$. We must show that
$\Lam^m (g f^m)=0$ in case the GVC holds for $k$-linear combinations of powers of distinct
partial derivatives.
\begin{enumerate}

\item[i)] Since each monomial of degree $d$ 
appearing in $\Lam$ can be written as a $k$-linear combination of powers 
of the form $l^d$, where $l$ is a $k$-linear combination of $\p_1, \p_2, \ldots, \p_n$,
we can write $\Lam$ as a $k$-linear combination of such powers, i.e.
\begin{equation} \label{eq3}
\Lam=c_1l_1^{d_1}+c_2l_2^{d_2}+\cdots+c_Nl_N^{d_N}
\end{equation}
for some $c_i\in k^{*}$, $d_i\in\N$, where 
$l_i=a_{i1}\p_1+\cdots+a_{in}\p_n$, for some $a_{ij}\in k$.
If $k= \C$ or $k = \R$, then we may assume that $c_1 = c_2 = \cdots = c_n = 1$
or $c_1^2 = c_2^2 = \cdots = c_n^2 = 1$ respectively. 

\item[ii)] Now we introduce $N$ new variables $y_1,y_2,\ldots,y_N$ and 
consider the operator
\begin{equation} \label{eq4}
\Lam^*:=(\p_{y_1}+l_1)^{d_1}+(\p_{y_2}+l_2)^{d_2}+\cdots+(\p_{y_N}+l_N)^{d_N}
\end{equation}
on the polynomial ring $k[x_1,\ldots,x_n,y_1,\ldots,y_N]$. 
Making the linear coordinate change defined by
\begin{align*}
x_i'&:=x_i-(a_{1i}y_1+\cdots+a_{Ni}y_N) & 
y_j'&:=y_j
\end{align*}
for all $i \le n$ and for all $j \le N$, we obtain $\p_{y_j}+l_j = \p_{y_j'}$, because
\begin{align*}
(\p_{y_j}+l_j) x_i' &= -a_{ji} + a_{ji} = 0  & 
(\p_{y_j}+l_j) y_t' &= \delta_{jt}
\end{align*}
for all $i \le n$ and all $j,t \le N$. Hence on these new coordinates, the 
operator $\Lam^*$ is of the form as described in the statement of the theorem.
It follows that the GVC holds for $\Lam^{*}$.

\item[iii)] Since $f\in k[x]$ satisfies $\Lam^mf^m=0$ for all 
$m\geq 1$, it follows from (\ref{eq3}) and (\ref{eq4}) that also $(\Lam^{*})^m f^m=0$ for 
all $m\geq 1$. As observed in ii), the GVC holds for $\Lam^*$, thus we obtain 
that for every $g\in k[x]$ also $(\Lam^{*})^m(gf^m)=0$ for all large $m$. But
again by (\ref{eq3}) and (\ref{eq4}), it follows that $\Lam^m(gf^m)=0$ for all large $m$, 
which concludes the proof. \qedhere

\end{enumerate}
\end{proof}

\noindent 
Finally, we give the proof of theorem \ref{thm3}, in which we re-use some 
techniques given in the proof of theorem \ref{thm2}.

\begin{proof}[Proof of theorem \ref{thm3}.] Let $\Lam=l_1\cdot l_2\cdot \cdots \cdot l_N$ 
for some non-zero linear forms $l_i$ in $\p_1, \p_2, \ldots, \p_n$.
As above, introduce $N$ new variables $y_1,y_2\ldots,y_N$ and consider the 
operator
\begin{equation} \label{eq5}
\Lam^{*}:=(\p_{y_1}+l_1)\cdot (\p_{y_2}+l_2)\cdot\cdots\cdot(\p_{y_N}+l_N)
\end{equation}
Making the linear coordinate change 
\begin{align*}
x_i'&:=x_i-(a_{1i}y_1+\cdots+a_{Ni}y_N) &
y_j'&:=y_j
\end{align*}
for all $i \le n$ and for all $j \le N$ again, we obtain that on these new coordinates,
the operator $\Lam^{*}$ is of the form
$$
\p_{y_1'}\cdot\p_{y_2'}\cdot\cdots\cdot\p_{y_N'}\mbox{.}
$$
By \cite[Cor.\@ 3.5.3]{0903.1478}, it follows that the GVC holds for this 
operator. Hence the GVC holds for $\Lam^{*}$. So just as in iii) above, we 
deduce that the GVC holds for $\Lam$.
\end{proof}

\bibliographystyle{myalphanum}
\bibliography{references}

\def\polhk#1{\setbox0=\hbox{#1}{\ooalign{\hidewidth
  \lower1.5ex\hbox{`}\hidewidth\crcr\unhbox0}}}
\begin{thebibliography}{vdEWZ}

\bibitem[BCW]{MR0663785}
Hyman Bass, Edwin~H. Connell, and David Wright.
\newblock The {J}acobian conjecture: reduction of degree and formal expansion
  of the inverse.
\newblock {\em Bull. Amer. Math. Soc. (N.S.)}, 7(2):287--330, 1982.

\bibitem[dBvdE]{MR2138860}
Michiel de~Bondt and Arno van~den Essen.
\newblock A reduction of the {J}acobian conjecture to the symmetric case.
\newblock {\em Proc. Amer. Math. Soc.}, 133(8):2201--2205 (electronic), 2005.

\bibitem[Cou]{MR1356713}
S.~C. Coutinho.
\newblock {\em A primer of algebraic {$D$}-modules}, volume~33 of {\em London
  Mathematical Society Student Texts}.
\newblock Cambridge University Press, Cambridge, 1995.

\bibitem[vdE]{MR1790619}
Arno van~den Essen.
\newblock {\em Polynomial automorphisms and the {J}acobian conjecture}, volume
  190 of {\em Progress in Mathematics}.
\newblock Birkh{\"{a}}user Verlag, Basel, 2000.

\bibitem[vdEWZ]{0903.1478}
Arno van~den Essen, Roel Willems, and Wenhua Zhao.
\newblock Some results on the vanishing conjecture of differential operators
  with constant coefficients.
\newblock arXiv:0903.1478, 2009.

\bibitem[vdEZ]{MR2418165}
Arno van~den Essen and Wenhua Zhao.
\newblock Two results on homogeneous {H}essian nilpotent polynomials.
\newblock {\em J. Pure Appl. Algebra}, 212(10):2190--2193, 2008.

\bibitem[Zha1]{MR2134306}
Wenhua Zhao.
\newblock Inversion problem, {L}egendre transform and inviscid {B}urgers'
  equations.
\newblock {\em J. Pure Appl. Algebra}, 199(1-3):299--317, 2005.

\bibitem[Zha2]{MR2247890}
Wenhua Zhao.
\newblock Hessian nilpotent polynomials and the {J}acobian conjecture.
\newblock {\em Trans. Amer. Math. Soc.}, 359(1):249--274 (electronic), 2007.

\bibitem[Zha3]{MR2368014}
Wenhua Zhao.
\newblock A vanishing conjecture on differential operators with constant
  coefficients.
\newblock {\em Acta Math. Vietnam.}, 32(2-3):259--286, 2007.

\bibitem[Zha4]{MR2859886}
Wenhua Zhao.
\newblock Mathieu subspaces of associative algebras.
\newblock {\em J. Algebra}, 350:245--272, 2012.

\end{thebibliography}

\end{document}